\documentclass[11pt]{amsart}

\allowbreak

\usepackage[indentafter]{titlesec}
\titleformat{name=\section}{}{\thetitle.}{0.8em}{\centering\scshape}
\titleformat{name=\subsection}[runin]{}{\thetitle.}{0.5em}{\bfseries}[.\\]
\titleformat{name=\subsubsection}[runin]{}{\thetitle.}{0.5em}{\itshape}[.\\]
\titleformat{name=\paragraph,numberless}[runin]{}{}{0em}{}[.]
\titlespacing{\paragraph}{0em}{0em}{0.5em}
\titleformat{name=\subparagraph,numberless}[runin]{}{}{0em}{}[.]
\titlespacing{\subparagraph}{0em}{0em}{0.5em}

\numberwithin{equation}{section}

\usepackage{amsfonts,amssymb,amscd,amsmath,latexsym,amsbsy}
\usepackage{graphicx}
\usepackage{amssymb}
\usepackage{amsfonts}
\usepackage{latexsym}
\usepackage{mathrsfs}
\usepackage{mathtools}
\usepackage{amsthm}
\usepackage{comment}
\usepackage{listings}
\usepackage{tikz-cd}
\usepackage{titlesec}
\usepackage{hyperref}
\usepackage{amsmath}
\usepackage{leftidx}
\usepackage{centernot}

\allowdisplaybreaks
\newtheorem{thm}{Theorem}[section]
\newtheorem{defn}[thm]{Definition}

\newtheorem{prop}[thm]{Proposition} 
\newtheorem*{rmk*}{Remark} 
\newtheorem{rmk}[thm]{Remark}
\newtheorem{coro}[thm]{Corollary}



\numberwithin{equation}{section}

\newcommand{\R}{\ensuremath{\mathbb{R}}}

\newcommand{\N}{\ensuremath{\mathbb{N}}}

\newcommand{\mk}{\ensuremath{\mathfrak}}

\newcommand{\la}{\ensuremath{\langle}}
\newcommand{\ra}{\ensuremath{\rangle}}



\title{The Topological Pressure of Trapped Sets in Kerr-(de Sitter) Spacetimes}

\author{Qiuye Jia}
\date{\today}

\begin{document}

\begin{abstract}
In this paper we prove that the topological pressure of dynamical systems with normally hyperbolic trapping is negative. In particular, this applies to the null geodesic flow in Kerr and Kerr-de Sitter spacetimes.

This builds a connection between results for trapped sets with low regularity in hyperbolic dynamical systems conditioning on negativity of the topological pressure and unconditional results in the setting of normally hyperbolic trapping.
\end{abstract}

\maketitle

\section{Introduction}

\subsection{The main result}
Let $\varphi^{\textbf{t}}$ be a flow (i.e., a one-parameter family of smooth maps) on a smooth manifold $\mathscr{M}$  and we will study the topological pressure, a quantity characterizing the `thickness' of its trapped set, associated with it.

Define the backward/forward trapped set by:
\begin{align}  \label{eq: definition Gamma pm}
\Gamma_{\pm} := \{  p \in \mathscr{M} :    \varphi^{\textbf{t}}(p) \centernot\longrightarrow \infty \text{ when } \textbf{t} \rightarrow \mp \infty \},
\end{align}
where $\centernot\longrightarrow \infty$ means remaining in a compact subset of $\mathscr{M}$. Then
\begin{align} \label{eq: definition, Gamma}
\Gamma = \Gamma_+ \cap \Gamma_-
\end{align}
is the trapped set.

Following \cite[Definition~20.2.1]{katok1995introduction} (see also \cite[Section~3.1]{burq2010strichartz} and \cite[Section~3.3]{nonnenmacher-zworski2009quantum}), the topological pressure of a flow on its trapped set is defined as follows. Let $d$ be the metric induced from the fiber norm of $T\mathscr{M}$ in Definition \ref{definition: r- NH trapping} below. A set $E \subset \Gamma$ is said to be $(\epsilon,T)-$separated if for any $p_1,p_2 \in E$, there exists $
\textbf{t} \in [0,T]$ such that 
\begin{align*}
d(\varphi^{\textbf{t}}(p_1),\varphi^{\textbf{t}}(p_2)) \geq \epsilon.
\end{align*}
Let $\lambda^u_1$ be the logarithmic unstable Jacobian defined in Section \ref{sec: log unstable Jacobian}, after we characterize the stable-unstable splitting of the dynamical system. Then we define 
\begin{align}
Z_T(\epsilon,s):= \sup_E \sum_{p \in E} e^{-s\lambda^u_1(p)},
\end{align}
where the $\sup$ is taken over all $(\epsilon,T)-$separated sets. Then we define
the topological pressure to be
\begin{align} \label{eq: definition of TPP, separated set} 
P_\Gamma(s):=\lim_{\epsilon \rightarrow 0} \limsup_{T \rightarrow \infty} \frac{1}{T} \log Z_T(\epsilon,s).
\end{align}
By \cite[Corollary~9.10.1]{walters2000introduction} (see also \cite[Equation~(1.4)]{nonnenmacher2015decay} in this setting of normally hyperbolic trapping), we have the following variational principle characterization of the topological pressure:
\begin{align} \label{eq: definition of TPP, variational}
P_{\Gamma}(s) = \sup_{\mu \in \mathrm{Erg}(\Gamma)} (h_\mu(\varphi^1)-s \int_\Gamma \lambda^u_1 d\mu)
\end{align}
where $\mathrm{Erg}(\Gamma)$ is the set of all flow-invariant ergodic measures supported on $\Gamma$, $h_{\mu}(\varphi)$ is the Kolmogorov-Sinai entropy of the measure $\mu$, and $\lambda^u_1$ is again the logarithmic unstable Jacobian defined in Section \ref{sec: log unstable Jacobian}. 

In order to give readers a sense of what this pressure is characterizing, we mention without proof its relationship with $d_\Gamma$, the Hausdorff dimension of the trapped set, in certain special cases.

Let $\mathscr{M}=S^*X$ be the cosphere bundle of a 2-dimensional Riemannian manifold $(X,g)$, and we take $\varphi^{\textbf{t}}$ to be the geodesic flow
of $g$. When $\varphi^{\textbf{t}}$ is hyperbolic on $\Gamma$,  
\begin{align*}
P_\Gamma(\frac{1}{2})<0,
\end{align*}
 is equivalent to 
\begin{align*}
d_\Gamma<2.
\end{align*}
See the discussion after \cite[Theorem~3]{nonnenmacher-zworski2009quantum} and \cite[Remark~3.4]{burq2010strichartz}.

Another example concerns the convex cocompact hyperbolic manifolds, denoted by $X$, and again we take $\mathscr{M}=S^*X$.  Here $X$ is the quotient by the group action on the hyperbolic space $\mathbb{H}^{n+1}$ (already conformally compactified) of a discrete subgroup $G$ of $SO(n,1)$. 
Convex cocompactness means the projection of the convex hull of the limiting set from $\mathbb{H}^{n+1}$ to its quotient is compact.

By the result of \cite{sullivan1979density}\cite{zworski1999dimension}, suppose the limiting set in $X$ has Hausdorff dimension $\delta \in [0,n)$, then the trapped set $\Gamma \subset S^*X$ has Hausdorff dimension 
\begin{align*}
d_\Gamma = 2\delta+1.
\end{align*}
On the other hand, \cite[Lemma~3.5]{burq2010strichartz} (which used the result of \cite{sullivan1979density}, saying that $P_\Gamma(0)=\delta$) shows that with assumptions above, the topological pressure of its geodesic flow is
\begin{align*}
P_\Gamma(\frac{1}{2}) = \delta - \frac{n}{2}.
\end{align*}
Then the topological pressure condition $P_\Gamma(\frac{1}{2})<0$ is equivalent to 
\begin{align*}
d_\Gamma < n+1.
\end{align*}

Our main result is that the topological pressure associated with the dynamical system with normally hyperbolic trapping is negative. 
\begin{thm} \label{thm: main}
Suppose $\Gamma$ is the trapped set of a flow with $\infty-$normally hyperbolic trapping, then $P_{\Gamma}(0)=0$, and
\begin{align}
P_\Gamma(s)<0,
\end{align}
for any $s>0$. In particular, $P_\Gamma(\frac{1}{2})<0$.
\end{thm}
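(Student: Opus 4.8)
The plan is to read both statements off the variational principle \eqref{eq: definition of TPP, variational}, after establishing two uniform facts about the measures in $\mathrm{Erg}(\Gamma)$ (a nonempty set, by compactness of $\Gamma$, once $\Gamma\neq\emptyset$, the case of interest): (i) $h_\mu(\varphi^1)=0$ for every $\mu\in\mathrm{Erg}(\Gamma)$, and (ii) $\int_\Gamma\lambda^u_1\,d\mu\ge c$ for some constant $c>0$ depending only on the normal‑hyperbolicity rate. Given (i), $P_\Gamma(0)=\sup_{\mu\in\mathrm{Erg}(\Gamma)}h_\mu(\varphi^1)=0$. Given (i) and (ii), for $s>0$ the order reversal of $x\mapsto-sx$ gives
\[
P_\Gamma(s)=\sup_{\mu\in\mathrm{Erg}(\Gamma)}\Bigl(h_\mu(\varphi^1)-s\int_\Gamma\lambda^u_1\,d\mu\Bigr)=-s\inf_{\mu\in\mathrm{Erg}(\Gamma)}\int_\Gamma\lambda^u_1\,d\mu\ \le\ -sc\ <\ 0,
\]
and in particular $P_\Gamma(\tfrac12)\le-\tfrac c2<0$.

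To prove (i) I would use that $\infty$-normal hyperbolicity (Definition \ref{definition: r- NH trapping}) makes $\Gamma$ a compact smooth $\varphi$-invariant submanifold along which the tangential differential grows subexponentially: for every $\epsilon>0$ there is $C_\epsilon$ with $\|d\varphi^t|_{T_p\Gamma}\|\le C_\epsilon e^{\epsilon|t|}$ for all $p\in\Gamma$ and $t\in\R$. This is precisely what ``$r$-normal hyperbolicity for every finite $r$'' provides, since for each $r$ the tangential expansion rate is bounded by $1/r$ of the normal rate. Hence the $C^\infty$ diffeomorphism $\varphi^1|_\Gamma$ of the compact manifold $\Gamma$ has only zero Lyapunov exponents with respect to any $\mu\in\mathrm{Erg}(\Gamma)$: the bound forces them to be $\le0$, and applying it to the time-reversed flow forces them to be $\ge0$. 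Ruelle's inequality then gives $h_\mu(\varphi^1)\le\sum(\text{positive Lyapunov exponents})=0$, so $h_\mu(\varphi^1)=0$. (Alternatively, since $\Gamma$ is smooth one can bypass Ruelle: the subexponential bound forces $h_{\mathrm{top}}(\varphi^1|_\Gamma)=0$ by an elementary covering estimate, and $P_\Gamma(0)=\sup_\mu h_\mu(\varphi^1)=h_{\mathrm{top}}(\varphi^1|_\Gamma)$.)

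To prove (ii) I would use the normal expansion in Definition \ref{definition: r- NH trapping}: there are $C,\nu>0$ with $\|(d\varphi^T|_{E^u_p})^{-1}\|\le Ce^{-\nu T}$ for $T\ge0$, so every singular value of $d\varphi^T|_{E^u_p}$ is at least $C^{-1}e^{\nu T}$ and hence $\log|\det(d\varphi^T|_{E^u_p})|\ge(\dim E^u)(\nu T-\log C)$. Since $\lambda^u_1$ is the infinitesimal logarithmic unstable Jacobian (Section \ref{sec: log unstable Jacobian}), normalized so that $\int_0^T\lambda^u_1(\varphi^t p)\,dt=\log|\det(d\varphi^T|_{E^u_p})|$, integrating in $p$ against the invariant probability measure $\mu$ and using $\int_\Gamma\lambda^u_1\circ\varphi^t\,d\mu=\int_\Gamma\lambda^u_1\,d\mu$ gives, for every $T>0$,
\[
\int_\Gamma\lambda^u_1\,d\mu\ =\ \frac1T\int_\Gamma\log|\det(d\varphi^T|_{E^u_p})|\,d\mu(p)\ \ge\ (\dim E^u)\Bigl(\nu-\tfrac{\log C}{T}\Bigr).
\]
Letting $T\to\infty$ yields $\int_\Gamma\lambda^u_1\,d\mu\ge\nu\dim E^u$; since $\dim E^u\ge1$, (ii) holds with $c=\nu>0$ (indeed $c=\nu\dim E^u$). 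Note that $\lambda^u_1$ itself need not be pointwise positive — only its long-time averages are, which is exactly what the computation above exploits via the invariance of $\mu$.

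The one genuinely delicate point is (i): one must make sure that $\infty$-normal hyperbolicity really does force the tangential Lyapunov exponents to vanish, which is where the strength ``$\infty$'' (as opposed to a finite $r$) is essential — with only $r$-normal hyperbolicity one would get $h_\mu\le r\cdot(\text{tangential rate})$, hence $P_\Gamma(s)<0$ only for $s$ sufficiently large. Everything else — the cocycle identity for $\lambda^u_1$, invariance of $\mu$, and the uniform expansion estimate on $E^u$ — is routine.
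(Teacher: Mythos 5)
Your proposal is correct and follows the same overall skeleton as the paper: invoke the variational principle \eqref{eq: definition of TPP, variational}, kill the entropy term, and show that $\int_\Gamma\lambda^u_1\,d\mu$ has a uniform positive lower bound by converting it, via flow-invariance, into a long-time average of the unstable Jacobian. For the Jacobian step the two arguments are essentially identical — the paper telescopes $\lambda^u_1$ into $\tfrac1k\lambda^u_k$ in discrete time and bounds it below by $n_u\nu-\epsilon$ using \eqref{eq: exponential property of the flow}, while you run the same cocycle identity in continuous time with a singular-value bound; the paper also sketches an alternative via an adapted metric. The genuine divergence is in the entropy step: the paper deduces $h_\mu(\varphi^1)=0$ from the vanishing of the nonnegative tangential Lyapunov exponents (Proposition \ref{prop: Lyapunov exp is 0, infinite normally hyperbolic trapping}) \emph{together with the Pesin entropy formula} (Theorem \ref{thm: pesin entropy formula}), whereas you use Ruelle's inequality $h_\mu(\varphi^1)\le\sum_{\chi_i>0}k_i\chi_i=0$. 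Your choice is actually the safer one: Pesin's formula as quoted requires $\mu$ to be absolutely continuous, a hypothesis that an arbitrary ergodic measure supported on the trapped set has no reason to satisfy, while Ruelle's inequality holds for every invariant Borel probability measure and gives exactly the one-sided bound needed (the reverse inequality $h_\mu\ge0$ is free). Your parenthetical alternative via $h_{\mathrm{top}}(\varphi^1|_\Gamma)=0$ from the subexponential tangential growth is likewise a valid, more elementary route to $P_\Gamma(0)=0$. In short: same architecture, but you substitute the correct general-measure tool where the paper reaches for an equality that needs extra hypotheses.
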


We will show that the trapping of the null geodesic flow in the Kerr and Kerr-de Sitter spacetimes is $\infty-$normally hyperbolic, hence it has negative topological pressure.

The dynamical behaviour of the geodesic flow has its own geometric interest, but we want to emphasize how it will affect the analysis of partial differential equations on manifolds.
In general, suppose $P$ is a pseudodifferential operator on a manifold $M$ with homogeneous principal symbol $p$, which is a function on $T^*M$. We can define its Hamilton vector field $H_p$ using the natural symplectic structure on $T^*M$. Then the result of H\"ormander\cite{hormander_propagation} and Duistermaat-H\"ormander\cite{duistermaat1972fourier} shows that, assuming that $f$ is smooth (the cited results deal with $f$ of general regularity, which we omit for simplicity)
and the $H_p$-flow is non-trapping, then the solution to
\begin{align}
Pu = f,
\end{align}
will remain the same regularity along the $H_p-$flow. This type of phenomenon is called the \emph{`propagation of singularities'}.

In particular, suppose $M$ is equipped with a metric $g$ and denote the fiber variable in $T^*M$ by $\xi$, then the principal symbol of the d'Alembert operator $P=\Box_g$ is $p = |\xi|^2_{g^{-1}}$, and the $H_p-$flow is the geodesic flow. The propagation phenomena indicates that the regularity of solutions to the wave equation on this spacetime, which in turn is closely related to the linearized Einstein equation, is closely related to the $H_p-$flow.

However, the aforementioned propagation of singularities in \cite{hormander_propagation}\cite{duistermaat1972fourier} only deals with the case where the $H_p-$flow is non-trapping since it only predicts regularity along the $H_p-$flow. Thus efforts have been spent to overcome this issue under certain conditions on the trapping. In particular, in the setting of the normally hyperbolic trapping, a resolvent estimate was obtained by Wunsch and Zworski \cite{wunsch2011resolvent}, which is applied to analyze the wave equations and their perturbations on Kerr-de Sitter spacetimes by Vasy \cite{vasy2013microlocal}. A spectral gap result was proven by Dyatlov \cite{dyatlov2016spectral}, which essentially initiated the idea of proving a propagation type estimate by a two step propagation process. This two step propagation was generalized to the time dependent setting by Hintz \cite{hintz2021normally}. Then the author \cite{jia2022propagation} improved the loss of regularity in the propagation estimate 
using a new pseudodifferential algebra, which is defined by introducing a resolution of singularity on the unstable manifold of the dynamical system of the null geodesic flow.

Since the topological pressure is a key feature of the `thickness' of the trapped set, hence characterizing the intensity of the trapping phenomena, its negativity naturally arises as conditions of various estimates.

When the dynamical system is hyperbolic (in the sense that $T\mathscr{M}$ restricted to $\Gamma$ splits into three parts: the flow direction, the unstable bundle and the stable bundle) a spectral gap result and a resolvent bound are proved by Nonnenmacher and Zworski\cite[Theorem~3 and~5]{nonnenmacher-zworski2009quantum} for semiclassical Schr\"odinger operators, conditioning on $P_\Gamma(\frac{1}{2})<0$. In \cite{burq2010strichartz}, under similar dynamical assumptions, Burq, Guillarmou and Hassell proved a Strichartz estimate for the Schr\"odinger propagator, also conditioning on $P_\Gamma(\frac{1}{2})<0$.
These results conditioning on $P_\Gamma(\frac{1}{2})<0$ are the major motivation of our current paper. Although they have different dynamical assumptions, i.e., normally hyperbolic instead of hyperbolic, it still could be considered as an indication that the normally hyperbolic trapping is weak in terms of its effect on the propagation of singularities. 

To illustrate the similarity of analytical aspects of these two types of dynamical systems, we introduce the logarithmic Sobolev order first. 
Let $\la \cdot \ra$ denote $(1+|\cdot|^2)^{1/2}$. We use 
\begin{align*}
||\la \xi \ra^s \hat{u}||_{L^2},
\end{align*} 
as the $s-$order Sobolev norm of $u$, where $\hat{u}$ is the Fourier transform of $u$ and $\xi$ is the dual variable in $\hat{u}$.

Then we introduce the logarithmic order by adding a power of $\la \log (1+|\xi|) \ra$ in the definition of norm. Precisely, the space equipped with the norm (and functions are defined to be in this space if this norm is finite)
\begin{align*}
||\la \log (1+|\xi|) \ra^m \la \xi \ra^s u||_{L^2},
\end{align*}
is called Sobolev space with $s-$polynomial order and $m-$logarithmic order. The propagation type estimate in the non-trapping case for $k-$th order pseudodifferential operator $P$ takes the form 
\begin{align*}
||Qu||_{H^s} \leq ||Q'u||_{H^s} + ||GPu||_{H^{s-k+1}}+||u||_{H^{-N}},
\end{align*}
where $Q,Q',G \in \Psi^0$ are microlocalizers that we are not going to describe in detail. In the case with trapping, this fails and we have `the loss of regularity' in the sense that $||GPu||_{H^{s-k+1}}$ needs to be replaced by stronger Sobolev norms. The difference between the norm needed in the estimate and this optimal non-trapping estimate is called \emph{the loss of regularity}.

The result in \cite{nonnenmacher-zworski2009quantum} shows that conditioning on $P_{\Gamma}(1/2)<0$, the propagation estimate for operator $P$ with hyperbolic bicharacteristic flow (i.e., the $H_p-$flow above) has logarithmic loss. On the other hand, propagation estimates with normally hyperbolic trapping also have logarithmic loss and are unconditional, as proven in \cite{wunsch2011resolvent}\cite{nonnenmacher2015decay} in the time-independent case, and indicated by the arbitrarily small polynomial loss proven in \cite{jia2022propagation} in the time-dependent case.
Thus, this motivates us to verify this topological pressure condition in the normally hyperbolic setting to give an explanation that we do not need to make this assumption: it is automatically satisfied.

This paper is organized as follows. In Section \ref{sec: dynamic preliminaries}, we introduce basic concepts, assumptions and results we need in the theory of dynamical systems. We will first introduce the precise definition of the $\infty-$normally hyperbolic trapping, and the logarithmic unstable Jacobian. Then we will show that the topological entropy term in the variational characterization of the topological pressure will vanish for dynamical systems with $\infty-$normally hyperbolic trapping. Then in Section \ref{sec: proof of the main theorem}, we prove Theorem \ref{thm: main}. 
Finally, in Section \ref{sec: application,KdS}, we briefly introduce Kerr(-de Sitter) spacetimes and show that Theorem \ref{thm: main} applies to the null geodesic flow on Kerr-(de Sitter) spacetimes.

\section{Dynamical preliminaries} \label{sec: dynamic preliminaries}

\subsection{\texorpdfstring{$\infty-$}{infinity }normally hyperbolic trapping}
We first recall the definition of eventually absolutely $r$-normally hyperbolic trapping in the sense of \cite{hirsch2006invariant} (see also \cite[Section~3.2]{dy15}\cite[Section~1.2]{wunsch2011resolvent}). 

\begin{defn}   \label{definition: r- NH trapping}
We say the trapping of $\varphi^{\textbf{t}}$ is eventually absolutely $r$-normally hyperbolic trapping for $r \in \N$, if there is a splitting of $T\mathscr{M}$ over its trapped set $\Gamma$:
\begin{align*}
T_\Gamma \Gamma^{u/s} = T\, \Gamma \oplus E^{u/s},\\
d\varphi^{\textbf{t}} : \, E^{u/s} \rightarrow E^{u/s},
\end{align*}
and there exists a fiber norm $|\cdot|$ and $\nu>0$ such that
\begin{align} \label{eq: exponential property of the flow}
|d\varphi^{\textbf{t}}(z)v| \leq C e^{-\nu|\textbf{t}|} |v|,  \begin{cases}
v \in E^u(z),\quad \textbf{t} \leq 0 \\
v \in E^s(z), \quad \textbf{t} \geq 0.
\end{cases}
\end{align}
And in addition, there exist $\theta_0,C>0$, such that for $\textbf{t}>0$:
\begin{align} \label{eq:r-NH}
\begin{split}
& \sup_{T\,\Gamma}||d\varphi^{\textbf{t}}|_{T\,\Gamma}||^r \leq Ce^{-\textbf{t}\theta_0}\inf_{T\,\Gamma}||d\varphi^{-\textbf{t}}|_{E_u}||^{-1},\\
& \inf_{T\,\Gamma}||d\varphi^{-\textbf{t}}|_{T\,\Gamma}||^{-r} \geq C^{-1}e^{\textbf{t}\theta_0} \sup_{T\,\Gamma}||d\varphi^{\textbf{t}}|_{E_s}||.
\end{split}
\end{align}
\end{defn}

A formulation that is equivalent and more convenient for us is, let $\nu_{\min}$ be the supremum of all $\nu$ such that (\ref{eq: exponential property of the flow}) holds, and let $\mu_{\max}$ be the maximal (asymptotic) expansion rate along $T\Gamma$, then 
\begin{align}  \label{eq: expansion rate, r-NH trapping}
\nu_{\min} > r \mu_{\max}.
\end{align}

Intuitively, (\ref{eq: exponential property of the flow}) means that the flow is hyperbolic in the $E^{u/s}-$directions, which are normal to $\Gamma$. And the additional assumption \eqref{eq:r-NH} means that the expansion/contraction on $E^{u/s}$ is considerably stronger than any expansion and contraction occurring in the flow on $\Gamma$. Now the $\infty-$normally hyperbolic trapping is defined as:
\begin{defn} \label{def:infty-NH-trapping}
If the trapping for $\varphi^{\textbf{t}}$ is eventually absolutely $r$
-normally hyperbolic trapping for all $r \in \N$, then we say that it is $\infty-$normally hyperbolic trapping.
\end{defn}

\subsection{The logarithmic unstable Jacobian} \label{sec: log unstable Jacobian}
In this section we define the logarithmic unstable Jacobian, which completes the definition of the topological pressure in (\ref{eq: definition of TPP, separated set}) and (\ref{eq: definition of TPP, variational}).

The fiber metric in Definition \ref{definition: r- NH trapping} induces a volume form $\Omega$ on $E^u$ (in fact also on other subbundles of $T\mathscr{M}$), and we denote its value at $p$ by $\Omega_p$. Denote the dimension of fibers of $E^u$ by $n_u$, and let $v_1,...,v_{n_u}$ be a basis of $E^u_p$ at a point $p \in \Gamma$, then we define the $\Omega-$determinant of $\varphi^{\textbf{t}}$  to be
\begin{align}
\det(d\varphi^{\textbf{t}}|_{E^{u}_p}):=
\frac{\Omega_{\varphi^t(p)}( d\varphi^{\textbf{t}}v_1 \wedge...\wedge d\varphi^{\textbf{t}}v_{n_u} )}{
	\Omega_p(v_1 \wedge ... \wedge v_{n_u})}.
\end{align}

As pointed out after \cite[Equation~(3.19)]{nonnenmacher-zworski2009quantum}, the resulting topological pressure is independent of the choice of the volume form (and in turn, independent of the fiber norm in (\ref{eq: exponential property of the flow})), hence we have omitted $\Omega$ in the notation. 

Using the determinant defined above, the logarithmic unstable Jacobian is defined to be
\begin{align}
\lambda^u_k(p) = \log ( \det(d\varphi^{\textbf{t}}|_{E^{u}_p})|_{\textbf{t}=k})
\end{align}

\subsection{Lyapunov exponents and the entropy}

\begin{prop} \label{prop: Lyapunov exp is 0, infinite normally hyperbolic trapping}
All Lyapunov exponents of the flow restricted to $T\Gamma$ for $\varphi^{\textbf{t}}$ which is $\infty-$normally hyperbolic trapping can only be 0.
\end{prop}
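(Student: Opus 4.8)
The plan is to relate the Lyapunov exponents of an invariant ergodic measure $\mu$ on $\Gamma$ to the expansion/contraction rates appearing in Definition~\ref{definition: r- NH trapping}, and then use the $r$-normally hyperbolic condition (\ref{eq: expansion rate, r-NH trapping}) for every $r$ to squeeze any nonnegative exponent to $0$. First I would recall the Oseledets decomposition: for $\mu$-a.e.\ $p \in \Gamma$ the tangent space $T_p\mathscr{M}$ splits into Oseledets subspaces with well-defined Lyapunov exponents $\chi_i$, defined as limits $\lim_{\textbf{t}\to\infty}\tfrac1{\textbf{t}}\log|d\varphi^{\textbf{t}}(p)v|$. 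Because the splitting $T_\Gamma\mathscr{M} = T\Gamma \oplus E^u \oplus E^s$ is $d\varphi^{\textbf{t}}$-invariant and continuous, each Oseledets subspace lies in one of the three summands, so each Lyapunov exponent is either an exponent "along $T\Gamma$", an exponent "on $E^u$", or an exponent "on $E^s$".

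Next I would bound the three families of exponents. For $v \in E^s(p)$, (\ref{eq: exponential property of the flow}) gives $|d\varphi^{\textbf{t}}(z)v| \le Ce^{-\nu|\textbf{t}|}|v|$ for $\textbf{t}\ge 0$, hence $\chi \le -\nu_{\min} < 0$; these exponents are strictly negative and in particular not a concern. For $v \in E^u(p)$, running the flow backward gives contraction, so forward in time $E^u$ is expanded and its exponents are $\ge \nu_{\min} > 0$; these are positive — but they are exponents of the \emph{normal} bundle, not obstructions to the claim (the statement is about all nonnegative exponents of the trapped set; I should read the claim as: the only nonnegative exponents coming from directions \emph{in which the flow could plausibly be neutral}, i.e.\ along $T\Gamma$, are $0$ — more carefully, I will show any exponent along $T\Gamma$ vanishes, and note the $E^u$ exponents are genuinely positive and $E^s$ exponents genuinely negative, so "nonnegative" forces either the $T\Gamma$ case or the $E^u$ case). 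The crucial estimate is for $v$ tangent to $\Gamma$: by the additional inequalities in Definition~\ref{definition: r- NH trapping}, for every $r$ and all $\textbf{t}>0$,
\begin{align*}
\sup_{T\Gamma}\|d\varphi^{\textbf{t}}|_{T\Gamma}\|^r \le Ce^{-\textbf{t}\theta_0}\inf_{T\Gamma}\|d\varphi^{-\textbf{t}}|_{E_u}\|^{-1},
\end{align*}
which yields $\mu_{\max} \le \tfrac1r\nu_{\min}$ via the equivalent formulation (\ref{eq: expansion rate, r-NH trapping}); similarly $\mu_{\min} \ge -\tfrac1r\nu_{\min}$ using the second inequality and the behavior on $E^s$. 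Letting $r \to \infty$ forces $\mu_{\max} = \mu_{\min} = 0$, so every Lyapunov exponent associated with a direction in $T\Gamma$ equals $0$. Hence the nonnegative Lyapunov exponents of the trapped set are exactly $0$ (those on $E^u$) together with, along $T\Gamma$, only $0$.

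The main obstacle I anticipate is bookkeeping rather than depth: making precise the identification of Oseledets subspaces with the continuous invariant splitting (continuity plus invariance is standard but should be stated), and being careful about the direction conventions — $\nu_{\min}$ is defined via (\ref{eq: exponential property of the flow}) with $|\textbf{t}|$, so $E^u$ contracts in backward time, and the expansion rate $\mu_{\max}$ along $T\Gamma$ must be compared to $\nu_{\min}$ with the correct sign. A secondary point is to confirm that "maximal asymptotic expansion rate along $T\Gamma$" in (\ref{eq: expansion rate, r-NH trapping}) indeed controls all Lyapunov exponents along $T\Gamma$ from above (and its negative controls them from below), which is immediate from the sub-multiplicativity of operator norms and the definition of Lyapunov exponents via limits; I would spell this out in one line. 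Once these identifications are in place, the $r\to\infty$ limit argument closes the proof.
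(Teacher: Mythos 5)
Your proposal is correct and takes essentially the same route as the paper, whose entire proof is the single sentence that the claim follows from (\ref{eq: expansion rate, r-NH trapping}); you simply make explicit the Oseledets bookkeeping and the squeeze $\mu_{\max} \le \nu_{\min}/r \to 0$ as $r \to \infty$ that the paper leaves implicit. Your reading of the statement as concerning the exponents along $T\,\Gamma$ (the ones that enter Pesin's formula for $\varphi^1|_\Gamma$ in Corollary \ref{coro: entrop vanishes}) is the intended one, since, as you note, the $E^u$ exponents are genuinely positive.
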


\begin{proof}
This follows from \eqref{eq: expansion rate, r-NH trapping} together with consideration of both the forward and backward flows.
\end{proof}


The tool we need to build the connection between the dynamical behaviour and the entropy term in (\ref{eq: definition of TPP, variational}) is the Margulis-Ruelle inequality, which we recall next and we refer the reader to \cite{Ruelle-inequality} for more details.
\begin{thm} \label{thm:Ruelle-inq}
Let $\varphi$ be a $C^{1+\epsilon}-$diffeomorphism $\Gamma \rightarrow \Gamma$ and let $\mu$ be 
a $\varphi-$invariant probability measure, then we have
\begin{align} \label{eq: Ruelle-ineq}
h_{\mu}(\varphi) \leq \int_\Gamma  \Sigma(x) d\mu(x),
\end{align}
where $\Sigma(x)$ is the sum of all positive Lyapunov exponents:
\begin{align*}
\sum_{\chi_i(x)>0} \chi_i(x)k_i(x),
\end{align*}
with $\chi_i(x)$ being the positive Lyapunov exponents of $\varphi$, and $k_i$ being its multiplicity.
\end{thm}

\begin{rmk}
When $\varphi$ is absolutely continuous (with respect to the Lebesgue measure of the background manifold), \eqref{eq: Ruelle-ineq} becomes an equality, which is called the Pesin entropy formula (see \cite[Section~5]{pesin1977entropy}). But in our case, it is important to allow $\mu$ that are not absolutely continuous, hence we need to use this inequality version. 
\end{rmk}

Since the Kolmogorov-Sinai entropy is non-negative by its definition, combining Proposition \ref{prop: Lyapunov exp is 0, infinite normally hyperbolic trapping} and Theorem \ref{thm:Ruelle-inq}, we have:

\begin{coro} \label{coro: entrop vanishes}
The Kolmogorov-Sinai entropy of $\varphi^1|_\Gamma$ vanishes:
\begin{align}
h_{\mu}(\varphi^1) = 0.
\end{align}
\end{coro}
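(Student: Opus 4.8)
The plan is to read off the statement by substituting Proposition \ref{prop: Lyapunov exp is 0, infinite normally hyperbolic trapping} into the entropy formula of Theorem \ref{thm: pesin entropy formula}. The first step is to make sure the hypotheses of that theorem are in force, that is, that $\varphi^1$ restricts to a $C^{1+\epsilon}$ diffeomorphism of $\Gamma$ onto itself. This is the one place where $\infty$-normal hyperbolicity enters beyond Proposition \ref{prop: Lyapunov exp is 0, infinite normally hyperbolic trapping}: $\Gamma$ is flow-invariant by construction and is a $C^\infty$ submanifold of $\mathscr{M}$ under the $\infty$-normal hyperbolicity setup (cf.\ the invariant manifold theory of \cite{hirsch2006invariant}, which already appears in Definition \ref{definition: r- NH trapping} when the splitting $T_\Gamma\Gamma^{u/s}=T\Gamma\oplus E^{u/s}$ is written down). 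Hence $\varphi^1|_\Gamma$ is a smooth diffeomorphism, and moreover, since $\Gamma$ is invariant, for $\mu\in\mathrm{Erg}(\Gamma)$ the metric entropy $h_\mu(\varphi^1)$ is the same whether computed on $\mathscr{M}$ or on $\Gamma$.

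Next I would fix $\mu\in\mathrm{Erg}(\Gamma)$. At a $\mu$-generic point the Lyapunov exponents of $\varphi^1|_\Gamma$ acting on $T\Gamma$ are precisely the Lyapunov exponents of the trapped set, so Proposition \ref{prop: Lyapunov exp is 0, infinite normally hyperbolic trapping} tells us that $\varphi^1|_\Gamma$ has no positive Lyapunov exponent; equivalently, the quantity $\Sigma$ appearing in (\ref{eq: pesin entropy formula}) vanishes on a set of full $\mu$-measure. Feeding $\Sigma\equiv 0$ into (\ref{eq: pesin entropy formula}) gives $h_\mu(\varphi^1)=0$ directly when $\mu$ is absolutely continuous. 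For a general ergodic $\mu$, which is what is needed in the variational principle (\ref{eq: definition of TPP, variational}), I would instead invoke the Ruelle inequality $h_\mu(\varphi^1)\le\int_\Gamma \Sigma\, d\mu$, the one-sided estimate that underlies (\ref{eq: pesin entropy formula}) and holds for every $C^1$ diffeomorphism and every invariant probability measure; with $\Sigma\equiv 0$ it forces $h_\mu(\varphi^1)\le 0$, and nonnegativity of metric entropy then gives $h_\mu(\varphi^1)=0$.

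Since the substitution itself is immediate, the only genuinely delicate points are the two bookkeeping items: first, that the exponents controlling the entropy of $\varphi^1|_\Gamma$ are exactly the expansion rates along $T\Gamma$ constrained by (\ref{eq: expansion rate, r-NH trapping}), and not the normal rates on $E^{u/s}$, together with enough regularity of $\Gamma$ to run Lyapunov--Pesin theory there; and second, the passage from absolutely continuous to general ergodic measures, for which one should cite Ruelle's inequality rather than the equality in Theorem \ref{thm: pesin entropy formula}. Once these are settled no further estimates are required.
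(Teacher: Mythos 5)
Your proof is correct and follows the same basic strategy as the paper: combine Proposition \ref{prop: Lyapunov exp is 0, infinite normally hyperbolic trapping} (no positive Lyapunov exponents along $T\Gamma$) with an entropy--exponents relation to conclude $h_\mu(\varphi^1)=0$. The paper's own proof is a one-line citation of the Pesin entropy formula (Theorem \ref{thm: pesin entropy formula}) together with that proposition. Where you genuinely diverge --- and where your version is more careful --- is in handling the hypothesis of Theorem \ref{thm: pesin entropy formula} that $\mu$ be absolutely continuous. The corollary is consumed in the variational principle (\ref{eq: definition of TPP, variational}), where the supremum runs over \emph{all} ergodic invariant measures on $\Gamma$, and a generic such measure on a trapped set has no reason to be absolutely continuous, so Pesin's equality does not literally apply. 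Your substitution of Ruelle's inequality $h_\mu(\varphi^1)\le\int_\Gamma\Sigma\,d\mu$, valid for every $C^1$ diffeomorphism and every invariant probability measure, combined with $\Sigma\equiv 0$ and the nonnegativity of entropy, closes exactly this gap; it is the standard and correct way to get the vanishing for all ergodic measures, and it strengthens the argument the paper actually gives. Your two bookkeeping points are also the right ones to flag: the exponents that control $h_\mu(\varphi^1|_\Gamma)$ are those of the restricted map on $T\Gamma$ (killed by (\ref{eq: expansion rate, r-NH trapping})), not the normal rates on $E^{u}$, which are genuinely positive and would ruin the bound if one applied Ruelle's inequality to $\varphi^1$ on all of $\mathscr{M}$; and $\Gamma$ must be a sufficiently regular invariant submanifold for Pesin--Ruelle theory to run there, which is exactly what the $\infty$-normally hyperbolic framework of \cite{hirsch2006invariant} supplies.
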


\section{Proof of the main theorem} \label{sec: proof of the main theorem}
In this section we prove Theorem \ref{thm: main}. Combining (\ref{eq: definition of TPP, variational}) and Corollary \ref{coro: entrop vanishes}, we know for $s\geq 0$,
\begin{align}  \label{eq: pressure, only 2nd term}
P_{\Gamma}(s) = s\sup_{\mu \in \mathrm{Erg}(\Gamma)} (- \int_\Gamma \lambda^u_1 d\mu),
\end{align}
which gives $P_\Gamma(0)=0$. Next we consider the case $s>0$.

Since the measure $\mu$ is flow invariant, the integral
\begin{align*}
\int \lambda^u_1(p) d\mu,
\end{align*}
equals the integral of

\begin{align*}
\log (\frac{\Omega_{\varphi^{k+1}(p)}( d\varphi^{k+1}v_1 \wedge...\wedge d\varphi^{k+1}v_{n_u} )}{\Omega_{\varphi^{k}(p)}(d\varphi^{k}v_1 \wedge ... \wedge d\varphi^{k}v_{n_u})}),
\end{align*}
which in turn equals
\begin{align*}
\int (\lambda^u_{k+1}(p)-\lambda^u_k(p)) d\mu.
\end{align*}
Taking the sum over $k$, we know 
\begin{align*}
\int \lambda^u_1 d\mu = \int \frac{1}{k} \lambda^u_k d\mu .
\end{align*}
Considering the integrand 
\begin{align}
\frac{1}{k} \lambda^u_k = \frac{1}{k} \log (\frac{\Omega_{\varphi^{k+1}(p)}( d\varphi^{k+1}v_1 \wedge...\wedge d\varphi^{k+1}v_{n_u} )}{\Omega_p(v_1 \wedge ... \wedge v_{n_u})}),
\end{align}
by (\ref{eq: exponential property of the flow}) we know for any $\epsilon>0$, this quantity is at least $n_u\nu-\epsilon$ when $k$ is large enough. Hence by (\ref{eq: pressure, only 2nd term}) we know $P_{\Gamma}(s)<0$ for $s>0$.

\begin{rmk}
In the proof above, we have used the flow-invariant property to replace the integrand by the logarithmic unstable Jacobian of a large time, which is strictly positive due to the asymptotic behaviour of the flow. An alternative proof can be given using the adapted metric $g_{\mathrm{ad}}$, that is the fiber metric such that we can take $C=1$ in (\ref{eq: exponential property of the flow}), which is similar to the requirement in (3.12,vi) of \cite{nonnenmacher-zworski2009quantum} in the hyperbolic setting, and it exists due to \cite[Theorem~4]{gourmelon2007adapted}.
After resorting to $g_{\mathrm{ad}}$, the time 1 logarithmic unstable Jacobian is strictly positive as well, and we can obtain the result directly.
\end{rmk}

\begin{rmk} \label{remark:spectral-gap-relation}
We notice that the bound on the topological pressure produced by the proof above is proportional to the expansion or contraction rate $\nu$ in \eqref{eq: exponential property of the flow}, which in turn is proportional to the spectral gap proved in \cite[Theorem~3]{dy15}. See also \cite{dyatlov2016spectral}\cite{dyatlov2015resonance}.

This expansion rate enters the spectral gap in the following way. 
The spectral gap is proven via a resolvent estimate, which in turn comes from a positive commutator argument.
See for example \cite[Section~2.3, Section~3]{dyatlov2016spectral}\cite[Section~8]{dyatlov2015resonance}.
Then the imaginary part of the spectral parameter becomes the skew-symmetric part of the commutator. 
In this positive commutator argument, in order to propagate control in the correct direction, we want a favored sign of this part combined with the term introduced by differentiating the commutant along the Hamilton flow (of the symbol of our operator), which is the expansion rate. Consequently, this becomes an inequality between the imaginary part of the spectral parameter and the expansion rate, which leads to the spectral gap.
\end{rmk}

\section{Application} \label{sec: application,KdS}
The major application of Theorem \ref{thm: main} is to the null geodesic flow on Kerr and Kerr-de Sitter spacetimes, which model rotating black holes. Specifically, the metric below solves the Einstein vacuum equation with cosmological constant $\Lambda=0$ and $\Lambda>0$ respectively. Since their difference lies near the cosmological horizon, which is not our major concern, we use Kerr(-de Sitter) to refer to both of them below, and our results will be uniform as $\Lambda \rightarrow 0$. Mathematically, a Kerr(-de Sitter) spacetime is a manifold $M^\circ$ equipped with a Lorentzian metric $g_{\mk{m},\mk{a}}$ such that
\begin{align}
M^\circ=\R_t \times X , \quad X=(r_e,r_c) \times \mathbb{S}^2,
\label{kds_manifold}
\end{align}
where $r_e,r_c$ are defined in (\ref{eq: KdS, 4 roots}) below. The Lorentzian metric $g_{\mk{m},\mk{a}}$ is determined by two parameters: the angular momentum $\mk{a}$ and the mass $\mk{m}$:
\begin{align}
\begin{split}
g_{\mk{m},\mk{a}} =& (r^2+\mk{a}^2\cos^2\theta)(\frac{dr^2}{\Delta(r)}+\frac{d\theta^2}{\Delta_\theta})+\frac{\Delta_\theta \sin^2\theta}{\Delta_0^2(r^2+\mk{a}^2\cos^2\theta)}(\mk{a}dt-(r^2+\mk{a}^2)d\varphi)^2\\
&-\frac{\Delta(r)}{\Delta_0^2(r^2+\mk{a}^2\cos^2\theta)}(dt-\mk{a}\sin^2\theta d\varphi)^2,
\end{split}
\label{kds_metric}
\end{align}
where $\Lambda$ is the cosmological constant and
\begin{align}
\begin{split}
&\Delta(r) = (r^2+\mk{a}^2)(1-\frac{\Lambda r^2}{3})-2\mk{m}r, \quad \Delta_\theta=1+\frac{\Lambda \mk{a}^2}{3}\cos^2\theta,\\
&\Delta_0 = 1+\frac{\Lambda \mk{a}^2}{3}, \quad \Lambda \geq 0.
\end{split}
\label{delta_intro}
\end{align}
We assume the black hole is subextremal in the sense that
\begin{align} \label{eq:subextremal-condition}
\mathfrak{a}<\mathfrak{m}, \quad  \Lambda \mk{m}^2<\tilde{\Lambda}_0,
\end{align}
where $\tilde{\Lambda}_0$ is the root (in $\Lambda \mk{m}^2$) of the determinant of $\Delta(r)$ as a quartic equation in $r$ when $\mk{a}=\mk{m}$. Numerically, by \cite[Equation~(30)]{sarp2011kds}, we have $\tilde{\Lambda}_0 \approx 0.1528$.
Conditions in \eqref{eq:subextremal-condition} imply that (see \cite[Section~3]{sarp2011kds} for more details)
\begin{align*}
\Delta(r) = (r^2+\mk{a}^2)(1-\frac{\Lambda r^2}{3})-2\mk{m}r
\end{align*}
has four distinct real roots
\begin{align} \label{eq: KdS, 4 roots}
r_-<r_C<r_e<r_c,
\end{align}
for $\Lambda>0$, and $r_c \rightarrow \infty$ as $\Lambda \rightarrow 0$, hence we take $r_c = \infty$ in that case. 
Consequently, the range defined by \eqref{eq:subextremal-condition} is included in the range allowed in \cite{petersen2022wave}.
Physically, $r_e$ is the location of the event horizon, and $r_c$ is the location of the cosmological horizon. The fact $r_c = \infty$ when $\Lambda=0$ corresponds to the fact that there is no cosmological horizon, or the cosmological horizon is `at infinity', in the  model with $\Lambda=0$.

The metric $g_{\mk{m},\mk{a}}$ defines a dual metric function $G(z,\zeta):=|\zeta|^2_{g_{\mk{m},\mk{a}}^{-1}(z)}$ on $T^*M^\circ$, and the flow induced by its Hamilton vector field $H_G$, defined using the natural symplectic structure on $T^*M^\circ$, is the geodesic flow (we are identifying $T^*M^\circ$ and $TM^\circ$ using this metric structure). We define $\varphi^{\textbf{t}}$ to be the flow induced by this vector field, and take $\mathscr{M}=T^*M^\circ$.

\begin{prop} \label{prop:Kerr-Kds}
The topological pressure of $\varphi^{\textbf{t}}$ defined using the flow of $H_G$ on subextremal Kerr and Kerr-de Sitter spacetimes is negative when $s>0$.
\end{prop}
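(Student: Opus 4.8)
The plan is to reduce this final Proposition entirely to Theorem \ref{thm: main} by verifying its single hypothesis, namely that the null geodesic flow $\varphi^{\textbf{t}}$ of $H_G$ on Kerr(-de Sitter) spacetimes exhibits $\infty$-normally hyperbolic trapping in the sense of the definition in Section \ref{sec: dynamic preliminaries}. Once that is established, the conclusion $P_\Gamma(s) < 0$ for $s > 0$ (and $P_\Gamma(0) = 0$) is immediate. So the body of the proof is really a citation-and-assembly argument: I would first recall that the trapped set $\Gamma$ of the null geodesic flow on a subextremal Kerr(-de Sitter) spacetime has been explicitly identified — it lies over the photon sphere $r = r_{\mathrm{ph}}$ in the static (Schwarzschild-de Sitter) case and over a photon region $r \in [r_1, r_2]$ in the rotating case — and that it is a smooth symplectic submanifold of $T^*M^\circ$ of codimension $2$ in the characteristic set. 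This is the content of, e.g., \cite{wunsch2011resolvent}, \cite{dyatlov2016spectral}, \cite{dy15}, and the extension to the full subextremal range in \cite{petersen2022wave}.

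Second, I would invoke the normal hyperbolicity estimates from those same references: there is a $d\varphi^{\textbf{t}}$-invariant splitting $T_\Gamma\mathscr{M} = T\Gamma \oplus E^u \oplus E^s$ with exponential expansion/contraction on $E^u$, $E^s$ as in \eqref{eq: exponential property of the flow}, and — this is the crucial quantitative point — the transverse expansion rate $\nu_{\min}$ dominates \emph{all} powers of the tangential expansion rate $\mu_{\max}$ along $\Gamma$. In fact, for these spacetimes the flow on $\Gamma$ has $\mu_{\max} = 0$: the trapped set carries no tangential expansion because the dynamics on $\Gamma$ is (after quotienting by the scaling and the flow directions) essentially that of a rotation/torus action, so $\|d\varphi^{\textbf{t}}|_{T\Gamma}\|$ grows at most polynomially (indeed is bounded on the relevant compact quotient). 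Therefore \eqref{eq: expansion rate, r-NH trapping}, $\nu_{\min} > r\,\mu_{\max}$, holds for every $r \in \N$, which is exactly the definition of $\infty$-normally hyperbolic trapping. I would present this as: the $r$-normal hyperbolicity for all $r$ follows from $\nu_{\min} > 0 = \mu_{\max}$, citing \cite{wunsch2011resolvent} for $\Lambda > 0$ near $\mathfrak{a} = 0$, \cite{dyatlov2016spectral} and \cite{dy15} for the Kerr-de Sitter and Kerr cases, and \cite{petersen2022wave} for the full subextremal Kerr-de Sitter range, with uniformity as $\Lambda \to 0$.

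Third, one small technical wrinkle: $\mathscr{M} = T^*M^\circ$ is non-compact and homogeneous in the fiber, while the definition of topological pressure and of the trapped set \eqref{eq: definition Gamma pm} uses escape to infinity in $\mathscr{M}$. The standard resolution, which I would state briefly, is to work on the cosphere bundle (or a fixed energy shell $\{G = 0\} \cap \{|\zeta| = 1\}$ away from the zero section) where the relevant part of $\Gamma$ is compact and the flow, suitably reparametrized, has the stated normally hyperbolic structure; the photon region is compact in $r$ precisely because $r_1, r_2 \in (r_e, r_c)$ are bounded away from the horizons. With that reduction the hypotheses of Theorem \ref{thm: main} are literally met, and applying it concludes the proof.

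The main obstacle is not in the dynamics — which is by now classical — but in ensuring the cited normal-hyperbolicity statements are quoted in exactly the form \eqref{eq: exponential property of the flow}/\eqref{eq: expansion rate, r-NH trapping} and that the reduction to a compact invariant set is done cleanly enough that "$\infty$-normally hyperbolic trapping" is verified on the nose rather than merely morally. Concretely, the one thing to be careful about is the claim $\mu_{\max} = 0$ (no tangential expansion): I would want to point to the explicit computation of the linearized flow on $\Gamma$ in \cite{dy15} (for Kerr) and \cite{petersen2022wave} (for the subextremal Kerr-de Sitter extension) showing the tangential dynamics is conjugate to a flow with bounded differential, which then makes $\nu_{\min} > r \mu_{\max}$ trivially true for all $r$.
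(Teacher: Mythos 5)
Your proposal is correct and follows essentially the same route as the paper: the paper's proof is exactly a reduction to Theorem \ref{thm: main} by citing the literature for $\infty$-normal hyperbolicity of the trapping (namely \cite[Proposition~3.2]{dy15} for Kerr, \cite{vasy2013microlocal} for Kerr-de Sitter with $\mathfrak{a}<\frac{\sqrt{3}}{2}\mathfrak{m}$, and \cite[Theorem~3.2]{petersen2022wave} for the full subextremal range). Your additional remarks on the photon region, the vanishing tangential expansion rate, and the reduction to a compact energy shell are reasonable supporting detail but not part of the paper's argument, which treats those points as contained in the cited references.
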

\begin{proof}
By Theorem \ref{thm: main}, the result follows if we know that the trapping of $\varphi^{\textbf{t}}$ on Kerr and Kerr-de Sitter spacetimes is $\infty-$normally hyperbolic trapping. On Kerr spacetimes, this is proven in \cite[Proposition~3.6, 3.7]{dy15}. 

On Kerr-de Sitter spacetimes, the hyperbolicity is verified in \cite{vasy2013microlocal} for the range with $\mk{a}<\frac{\sqrt{3}}{2}\mk{m}$, and then extended to the entire subextremal range $\mk{a}<\mk{m}$ in \cite[Theorem~3.2]{petersen2022wave}, and now we explain how 
the proof of \cite[Proposition~3.6, 3.7]{dy15} can be adapted to this setting in combination with \cite[Theorem~3.2]{petersen2022wave}, and this finishes the proof.

The ingredients in the proof of \cite[Proposition~3.6, 3.7]{dy15} that need a justification when we extend to region \eqref{eq:subextremal-condition} are the characterization of the trapped set $\Gamma$ and the backward/forward trapped set $\Gamma_\pm$, and \cite[Equation~(3.19)]{dy15}.
Firstly, the characterization of the trapped set $\Gamma$ and the backward/forward trapped set $\Gamma_\pm$ in \cite[Proposition~3.3, 3.5]{dy15} extends to the full subextremal range of Kerr-de Sitter spacetimes by \cite[Theorem~3.2,(c)(d)]{petersen2022wave} since those defining functions combined together are equivalent.
As mentioned after \eqref{eq: KdS, 4 roots}, this range includes the range we defined by \eqref{eq:subextremal-condition}.
In addition, \cite[Equation~(3.19)]{dy15} extends to our region by the second part of \cite[Theorem~3.2.(a)]{petersen2022wave}.
Other steps in the proof of \cite[Proposition~3.6, 3.7]{dy15} extend to our range \eqref{eq:subextremal-condition} directly.

\end{proof}

We conclude by discussing the range of parameters allowed and the extremal case. 
We expect Proposition~\ref{prop:Kerr-Kds} extends to a range of parameters that is larger than \eqref{eq:subextremal-condition}. Because we only used \cite[Theorem~3.2]{petersen2022wave} to justify requirements in the proof of \cite[Proposition~3.6, 3.7]{dy15}, the argument we gave should go through in the range that only requiring $\Delta(r)$ to have four distinct real roots, which is the range used in \cite{petersen2022wave}. We did not choose to do so because there is no very simple or explicit upper bound of $\frac{\mk{a}}{\mk{m}}$ and $\Lambda \mk{m}^2$ to characterize this range.

When we are indeed in the extremal case, two roots of $\Delta(r)$ merge together and the structure of the spacetime changes. If we increase $\Lambda\mk{m}^2$ so that $r_e$ and $r_c$ merge, then the interval $(r_e,r_c)$ on which the analysis in \cite{petersen2022wave} happens collapses to a point and the characterization of the trapped set is not valid anymore.
Potentially, one can continue to use similar algebraic equations to define $\Gamma$ and $\Gamma_\pm$, but there are additional issues to deal with.
For example, the dynamics in the normal directions is not strictly hyperbolic anymore.
Using the expression of the linearization of the null bicharacteristic flow in \cite[Theorem~3.2.(e)]{petersen2022wave}, after taking $r_{\xi_t,\xi_{\phi}}$ there to be $r_e=r_c$, the matrix in the leading order term is not strictly hyperbolic anymore since $\mu(r_e)=0$.

If we increase $\mk{a}$ so that $r_C$ and $r_e$ merge. Consider the case $\Lambda=0$ and $\mk{a}=\mk{m}$ for simplicity. Then the analysis in \cite[Proposition~3.9]{dy15} (in particular, Eq.(3.38) there) shows that the dynamics in the normal directions is not strictly hyperbolic anymore and the conclusion fails.

\section*{Acknowledgements}
The author would like to thank Andrew Hassell for asking the question that is equivalent to the main theorem here. The author is very grateful to Zhongkai Tao, who told the author about tools needed in the theory of dynamical systems.
The author is also grateful for the helpful comments of the referee. In particular, the referee's suggestion regarding the correction of the form of Theorem~\ref{thm:Ruelle-inq} and the connection between the spectral gap and the topological pressure discussed in Remark~\ref{remark:spectral-gap-relation}.

\bibliographystyle{plain}
\bibliography{bib_dynamic}

\end{document}